\theoremstyle{plain}
\newtheorem{teo}{Theorem}[section]
\newtheorem{prop}[teo]{Proposition}
\newtheorem{lema}[teo]{Lemma}
\theoremstyle{definition}
\theoremstyle{remark}
\newtheorem{nota}[teo]{Remark}
\numberwithin{equation}{section}
\newcommand\Ad{\operatorname{Ad}}
\newcommand\Aff{\operatorname{Aff}}
\newcommand\Exp{\operatorname{Exp}}
\newcommand\Iso{\operatorname{Iso}}
\newcommand\Lie{\operatorname{Lie}}
\newcommand\SO{\operatorname{SO}}
\newcommand\SU{\operatorname{SU}}
\newcommand\Tr{\operatorname{Tr}}
\newcommand\iso{\mathfrak{iso}}
\newcommand\so{\mathfrak{so}}
\newcommand\tr{\mathfrak{tr}}
\renewcommand\gg{\mathfrak{g}}
\newcommand\gb{\mathfrak{b}}
\newcommand\gh{\mathfrak{h}}
\newcommand\gk{\mathfrak{k}}
\newcommand\gm{\mathfrak{m}}
\newcommand\gp{\mathfrak{p}}
\newcommand\bbc{\mathbb{C}}
\newcommand\bbv{\mathbb{V}}
\newcommand\bbw{\mathbb{W}}
\newcommand\bbr{\mathbb{R}}
\newcommand\cc{\mathscr{C}}
\newcommand\cd{\mathscr{D}}
\newcommand\cf{\mathscr{F}}
\begin{document}

\title[The uniqueness of the canonical connection]{A note
  on the uniqueness of the canonical connection of a naturally
  reductive space}

\author{Carlos Olmos}

\author{Silvio Reggiani} 

\address{Facultad de Matem\'atica, Astronom\'ia y F\'\i sica,
  Universidad Nacional de C\'ordoba, Ciudad Universitaria, 5000
  C\'ordoba, Argentina}

\email{olmos@famaf.unc.edu.ar \qquad reggiani@famaf.unc.edu.ar}

\date{\today}

\thanks {2010 {\it  Mathematics Subject Classification}.  Primary
  53C30; Secondary 53C35}

\thanks {{\it Key words and phrases}. Naturally reductive, canonical
  connection, skew-symmetric torsion, isometry group}

\thanks{Supported by Universidad Nacional de C\'ordoba and
  CONICET. Partially supported by ANCyT, Secyt-UNC and CIEM}

\begin{abstract}
We extend the result in J.\ Reine Angew.\ Math.\ \textbf{664}, 29--53,
to the non-compact case. Namely, we prove that the canonical
connection on a simply connected and irreducible naturally reductive
space is unique, provided the space is not a sphere, a compact Lie
group with a bi-invariant metric or its symmetric dual. In particular,
the canonical connection is unique for the hyperbolic space when the
dimension is different from three. We also prove that the canonical
connection on the sphere is unique for the symmetric
presentation. Finally, we compute the full isometry group (connected
component) of a compact and locally irreducible naturally reductive
space.
\end{abstract}

\maketitle

\section{Introduction and preliminaries}

\'Elie Cartan, in the 1920s, asked for linear connections, on a given
Riemannian space, that adapt to the geometry in a more suitable way
than the Levi-Civita connection \cite{cartan-1924}. He proposed to
study the so-called connections with skew-torsion. Such connections
are characterized by the property of having parallel metric tensor and
the same geodesics as the Levi-Civita connection.

Spaces with skew-torsion have an increasing interest in recent years
because of their  applications to theoretical physics (see
\cite{agricola-2006}). A distinguished family of Riemannian spaces with
skew-torsion are the naturally reductive spaces. In fact, the
canonical connection $\nabla^c$ of a naturally reductive space $M=G/H$
provides a metric connection and has skew-torsion $T = -2(\nabla -
\nabla^c)$, where $\nabla$ is the Levi-Civita connection. If $M$ is a
symmetric space, then the Levi-Civita connection is a canonical
connection. 

In a naturally reductive space one has that $\nabla^c R = 0$ and
$\nabla^c T = 0$, where $R$ is the Riemannian curvature tensor. More
generally, any $G$-invariant tensor on $M$ must be parallel with
respect to the canonical connection.

In \cite{olmos-reggiani-2012} it was proved that the canonical
connection of a (locally irreducible) compact naturally reductive
space is unique, provided  the space is  different from the following
symmetric spaces: spheres, real projective spaces and compact Lie
groups with a bi-invariant metric.

The proof given in \cite{olmos-reggiani-2012} uses strongly the
compactness assumption (besides the so-called skew-torsion holonomy
theorem). Namely, it makes use of a decomposition theorem for compact
homogeneous spaces which is false in the non-compact case (however,
such a decomposition theorem was crucial in the proof the skew-torsion
holonomy theorem).

The purpose of this note is to prove that the canonical connection is
unique also for simply connected (irreducible) non-compact naturally
reductive spaces, with the  only exceptions of   dual symmetric spaces
of compact  Lie groups. In particular, the canonical connection is
unique for any real hyperbolic space $M = H^n$ with $n \neq 3$ (in
contrast with the compact case where many spheres are excluded).

Observe that the main result of this article, stated precisely in
Theorem \ref{unica}, also has a local version, since a canonical
connection on a naturally reductive space lifts to the universal
cover.

In order to prove Theorem~\ref{unica}, we use some auxiliary facts
that we want to mention. Namely,  that the real hyperbolic space $H^n$
admits a unique naturally reductive presentation (the symmetric pair
presentation). This allows to prove that the canonical connection on
$H^n$ is unique for all $n \neq 3$. To do this, we use that the
canonical connection on the sphere $S^n$, with $n \neq 3$, is unique
if we fix the symmetric presentation $S^n = \SO(n + 1)/\SO(n)$, since
from Hodge theory there are no non-trivial parallel $3$-forms (see
Remark~\ref{hodge}).

Finally, in Section~\ref{sec:3}, we explicitly compute the isometry
group of a compact and locally irreducible naturally reductive
space. This extends the result in \cite{reggiani-2010} for normal
homogeneous spaces (and known results by Onishchik
\cite{onishchik-1992} and Shankar \cite{shankar-2001} on isometry
groups of homogeneous spaces).

\subsection{Skew-torsion holonomy systems}

In a previous work \cite{olmos-reggiani-2012} we deal with the concept
of skew-torsion holonomy systems, which are a variation of the
so-called holonomy systems introduced by J.\ Simons in
\cite{simons-1962}. Skew-torsion holonomy systems arise in a natural
way and in a geometric context, by considering the difference tensor
between two metric connections which have the same geodesics as the
Levi-Civita connection. 

We say that a triple $[\bbv, \Theta, G]$ is a \emph{skew-torsion
  holonomy system} provided $\bbv$ is an Euclidean space, $G$ is a
connected Lie subgroup of $\SO(\bbv)$, and $\Theta$ is a totally
skew-symmetric $1$-form on $\bbv$ which takes values in the Lie
algebra $\gg$ of $G$ (i.e., $(x, y, z) \mapsto \langle\Theta_xy,
z\rangle$ is an algebraic $3$-form on $\bbv$). We say that $[\bbv,
  \Theta, G]$ is \emph{irreducible} if $G$ acts irreducibly on $\bbv$,
\emph{transitive} if $G$ is transitive on the sphere of $\bbv$, and
\emph{symmetric} if $g_*(\Theta) = \Theta$ for all $g \in G$, where
$g_*(\Theta)_x = g \circ \Theta_{g^{-1}(x)} \circ g^{-1}$. 

The main result on skew-torsion holonomy systems is analogous to
Simons holonomy theorem for classical holonomy systems. Such a result
is actually stronger because transitive cases cannot occur others than
the full orthogonal group. 

\begin{teo}[Skew-torsion Holonomy Theorem
    \cite{nagy-2007,olmos-reggiani-2012}] 
Let $[\bbv, \Theta, G]$, $\Theta \neq 0$, be an irreducible
skew-torsion holonomy system with $G \neq \SO(\bbv)$. Then $[\bbv,
  \Theta, G]$ is symmetric and non-transitive. Moreover, 
\begin{enumerate} 
\item $(\bbv, [\cdot,\cdot])$ is an orthogonal simple Lie algebra, of
  rank at least $2$, with respect to the bracket $[x,y] = \Theta_xy$; 
\item $G = \Ad(H)$, where $H$ is the connected Lie group associated to
  the Lie algebra $(\bbv, [\cdot,\cdot])$; 
\item $\Theta$ is unique, up to a scalar multiple. 
\end{enumerate}
\end{teo}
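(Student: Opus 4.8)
The plan is to reduce the whole statement to the single assertion that the system is symmetric, and then to prove that assertion by an argument in the spirit of Simons' holonomy theorem \cite{simons-1962}.

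First I would record the algebraic skeleton. Setting $[x,y] := \Theta_x y$, the fact that $(x,y,z)\mapsto\langle\Theta_x y,z\rangle$ is a $3$-form says exactly that the bracket is anticommutative and that each $\ad_x := \Theta_x$ lies in $\so(\bbv)$, i.e.\ the inner product is $\ad$-invariant. Suppose for the moment the system is symmetric, $g\circ\Theta_{g^{-1}x}\circ g^{-1} = \Theta_x$ for all $g\in G$; differentiating at $g=e$ along $A\in\gg$ gives $[A,\Theta_x] = \Theta_{Ax}$ for all $A\in\gg$ and $x\in\bbv$, and specializing $A=\Theta_y$ yields $\Theta_{\Theta_y x}=[\Theta_y,\Theta_x]$, which is the Jacobi identity. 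So $(\bbv,[\cdot,\cdot])$ becomes a Lie algebra with $\ad$-invariant positive definite inner product, and from here the rest is bookkeeping. Since $\Theta\neq 0$, $G$ is nontrivial, so $\bbv^G=0$; the centre of $(\bbv,[\cdot,\cdot])$ is an ideal, hence $G$-invariant, hence zero, so $\bbv$ is of compact semisimple type, and irreducibility applied to its (likewise $G$-invariant) simple ideals makes $\bbv$ simple. The adjoint representation of a simple compact Lie algebra is irreducible of real type, so it has trivial centralizer in $\so(\bbv)$; together with $g\Theta_x g^{-1}=\Theta_{gx}$ this forces $\gg=\ad\bbv$ and hence $G=\Ad(H)$, which is (2). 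If the rank were $1$ then $\bbv\cong\so(3)$ and $G=\Ad(H)=\SO(3)=\SO(\bbv)$, excluded; so the rank is $\geq 2$, which is (1), and the same fact shows $\Ad(H)$ is not transitive on the sphere of $\bbv$ (a regular adjoint orbit has codimension equal to the rank), so the system is non-transitive. For (3): a second $\gg$-valued skew $1$-form is of the form $\Theta'_x=\ad_{f(x)}$ with $f$ linear (as $\ad$ is injective), and applying the ``symmetric'' conclusion to $[\bbv,\Theta',G]$ gives $f([w,x])=[w,f(x)]$, so $f=\lambda\,\Id$ by Schur and $\Theta'=\lambda\Theta$.

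It remains to prove: \emph{if $G\neq\SO(\bbv)$ then the system is symmetric} --- equivalently, that the $3$-form $\omega(x,y,z)=\langle\Theta_x y,z\rangle$ is $\gg$-invariant. This is the hard core, and I would phrase the target as a classification: an irreducible $\gg\subsetneq\so(\bbv)$ admitting a nonzero totally skew $\gg$-valued $1$-form must be $\ad\mathfrak k$ for some compact simple $\mathfrak k$, with $\Theta$ a multiple of the Cartan form $\langle[\cdot,\cdot],\cdot\rangle$, which is manifestly symmetric. Two routes are available. The representation-theoretic one is to compute the ``skew prolongation'' $\Lambda^3\bbv^*\cap(\bbv^*\otimes\gg)$ (inside $\so(\bbv)\cong\Lambda^2\bbv^*$) for every irreducible $\gg$ and check it vanishes except in the two listed cases, dividing into the transitive $\gg$ (run through the classification of transitive actions on spheres) and the non-transitive ones (where $(\gg,\bbv)$ is essentially the isotropy representation of a symmetric space). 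The geometric one is a Simons-type propagation: assuming $\omega$ is not $\gg$-invariant, the $\gg$-submodule of $\Lambda^3\bbv^*$ it generates is nontrivial, and transporting $\omega$ by transvection-like elements of $G$ produces enough independent skew forms to force $G$ to act transitively on the sphere, hence $G=\SO(\bbv)$. I expect this last point --- excluding a non-symmetric system with $G$ non-transitive --- to be where essentially all the difficulty is concentrated; the reductions above it are formal.
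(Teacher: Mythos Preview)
The paper does not contain a proof of this theorem. It is stated in the preliminaries (Section~1.1) as a result quoted from \cite{nagy-2007} and \cite{olmos-reggiani-2012}, and is then used as a black box in the proof of Theorem~\ref{unica}. There is therefore no ``paper's own proof'' to compare your proposal against.

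On the substance of your outline: the reductions you perform \emph{assuming} symmetry are correct and standard. Differentiating the symmetry relation gives $[A,\Theta_x]=\Theta_{Ax}$ for $A\in\gg$, which both yields the Jacobi identity and shows that every element of $\gg$ acts on $(\bbv,[\cdot,\cdot])$ as a derivation; once the centre is seen to be $G$-invariant and hence zero, $(\bbv,[\cdot,\cdot])$ is semisimple of compact type, all derivations are inner, and $\gg=\ad\bbv$ follows, from which simplicity, rank $\geq 2$, non-transitivity, and the Schur argument for uniqueness of $\Theta$ all drop out as you say. You correctly isolate the genuine content as the implication ``$G\neq\SO(\bbv)\Rightarrow$ symmetric'', and you correctly name the two existing routes: Nagy's representation-theoretic computation of the skew-symmetric prolongation \cite{nagy-2007}, and the geometric argument of \cite{olmos-reggiani-2012}, which --- as the present paper notes in its introduction --- relies on a decomposition theorem for compact homogeneous spaces rather than on a direct Simons-style propagation. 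Your proposal does not carry out either route, so as a proof it is incomplete precisely where you yourself flag the difficulty; as a map of the argument it is accurate.
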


\section{The uniqueness of the canonical connection}

In this section we prove a uniqueness result for canonical connections
on naturally reductive spaces, compact or not. 

\begin{teo}\label{unica} 
Let $M$ be a simply connected and irreducible naturally reductive
space. Assume that $M$ is not (globally) isometric to a sphere, nor to
a Lie group with a bi-invariant metric or its symmetric dual. Then,
the canonical connection on $M$ is unique. 
\end{teo}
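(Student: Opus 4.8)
The strategy is to reduce the uniqueness question to the Skew-torsion Holonomy Theorem quoted above. Suppose $\nabla^c$ and $\widetilde\nabla^c$ are two canonical connections on $M$, associated to naturally reductive data $M = G/H$ and $M = \widetilde G/\widetilde H$. Both have skew-symmetric torsion and the same geodesics as the Levi-Civita connection $\nabla$, so the difference tensor $D = \nabla^c - \widetilde\nabla^c$ is, at each point $p$, a totally skew-symmetric $3$-form $D_p$ on $T_pM$. The first key step is to show that $D$ is parallel with respect to $\nabla^c$ (equivalently $\widetilde\nabla^c$): this follows because the torsion tensors $T^c, \widetilde T^c$ are each parallel for their own canonical connections, and one can transfer parallelism using that $\nabla^c$ and $\widetilde\nabla^c$ have the same geodesics; alternatively, $D$ can be shown to be $G\cap\widetilde G$-invariant in a suitable sense. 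Then at a fixed $p$, letting $\bbv = T_pM$ and $G' = \Hol_p(\nabla^c)$ be the (restricted) holonomy group of the canonical connection, the pair $[\bbv, \Theta, G']$ with $\Theta = D_p$ becomes a skew-torsion holonomy system: $\Theta$ is an algebraic $3$-form, and since $D$ is $\nabla^c$-parallel, $\Theta$ takes values in the holonomy algebra (this is the standard Ambrose–Singer type argument — the derivative of a parallel tensor lies in the holonomy algebra, and here $D$ itself does because $\nabla D = 0$ forces the curvature endomorphisms to commute appropriately with $D$; more directly, $\Theta_x \in \gg'$ for all $x$).

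The second step handles irreducibility. Since $M$ is irreducible as a Riemannian manifold, one must argue that the relevant holonomy system is irreducible, or decompose into irreducible pieces. If $\nabla^c$ is itself irreducible (its holonomy acts irreducibly on $\bbv$), we apply the Skew-torsion Holonomy Theorem directly to $[\bbv, \Theta, G']$: either $G' = \SO(\bbv)$, or $\Theta \neq 0$ forces $(\bbv, \Theta)$ to be a simple orthogonal Lie algebra of rank $\geq 2$, with $G' = \Ad(H)$ for the corresponding compact Lie group $H$ — and in that case $M$ would be (a quotient related to) the Lie group $H$ with a bi-invariant metric, contradicting the hypothesis. The case $G' = \SO(\bbv)$ with $\Theta \neq 0$: a nonzero algebraic $3$-form invariant under all of $\SO(\bbv)$ is impossible in dimension $\geq 3$ (and $\dim M \geq 2$; the low-dimensional cases, in particular $\dim M = 3$ giving the sphere $S^3$ and hyperbolic $H^3$ exceptions, must be excluded or treated separately). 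Hence $\Theta = 0$, i.e. $D = 0$ at $p$, and by the parallelism and connectedness $D \equiv 0$ on all of $M$, giving $\nabla^c = \widetilde\nabla^c$. If $\nabla^c$ is reducible, one splits $\bbv$ into $\nabla^c$-parallel subspaces; but $M$ Riemannian-irreducible forces the Levi-Civita holonomy to be irreducible, and one has to rule out that the canonical connection splits in a way incompatible with this — here is where the sphere and hyperbolic-space exceptions genuinely enter, and where one invokes the auxiliary fact (flagged in the introduction) that $H^n$ admits only the symmetric presentation, plus the Hodge-theoretic vanishing of invariant $3$-forms on $S^n$.

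**The main obstacle.** The delicate point is not the algebra of the holonomy theorem but the global/structural bookkeeping: passing from "the difference form $D_p$ vanishes" to "$D \equiv 0$" requires knowing $D$ is $\nabla^c$-parallel, and establishing that parallelism cleanly — as well as correctly identifying the holonomy group of $\nabla^c$ and verifying that $\Theta$ lands in its Lie algebra — is the technical heart. Equally, the reducible case forces a careful analysis: one must show that any naturally reductive presentation of an irreducible $M$ has, in the relevant sense, an irreducible canonical holonomy, or else the space is one of the listed exceptions. I expect the bulk of the real work to be in disentangling the possible naturally reductive structures on $M$ (different $G$'s) and showing they all give the same $\nabla^c$ unless $M$ is a sphere, a bi-invariant Lie group, or a noncompact dual — this is exactly where the noncompact case diverges from \cite{olmos-reggiani-2012}, since the compactness-based decomposition theorem is unavailable and must be replaced by a direct argument using the symmetric-presentation uniqueness of $H^n$.
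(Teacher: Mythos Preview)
Your overall architecture is right --- form a skew-torsion holonomy system from the difference of two canonical connections and invoke the Skew-torsion Holonomy Theorem --- but two load-bearing steps are either wrong or missing.

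\medskip

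\textbf{The group in the holonomy system.} You take $G' = \Hol_p(\nabla^c)$ and claim that $\Theta = D_p$ lands in its Lie algebra because $D$ is $\nabla^c$-parallel. Neither assertion is justified. There is no reason for $\widetilde T^c$ (which is $\widetilde\nabla^c$-parallel) to be $\nabla^c$-parallel: the two connections come from possibly different transitive groups $G$, $\widetilde G$, and $G$-invariance of $\widetilde T^c$ is exactly what is not known. And even if $D$ were $\nabla^c$-parallel, that would say the holonomy \emph{commutes} with $D$ as a tensor, not that each $\Theta_x$ lies in the holonomy algebra; your ``Ambrose--Singer type argument'' conflates these. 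The correct group is the \emph{full isotropy} $H = (\Iso(M)_p)^o$: for each $v\in T_pM$ there are transvection Killing fields $X\in\gg$ and $\widetilde X\in\widetilde\gg$ with $X_p=\widetilde X_p=v$ and $(\nabla X)_p$, $(\nabla\widetilde X)_p$ equal to the respective difference tensors at $v$; then $X-\widetilde X$ is a Killing field vanishing at $p$, hence lies in the full isotropy, and its covariant derivative at $p$ is $\Theta_v$. This is the content of \cite[Section~6]{olmos-reggiani-2012}, and it is what makes $[T_pM,\Theta,H]$ a skew-torsion holonomy system.

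\medskip

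\textbf{The decomposition argument.} Once the system is set up with the full isotropy $H$, it need not be irreducible, and this is where the real work lies --- your ``reducible case'' paragraph waves at it but supplies nothing. The skew-torsion system decomposes $T_pM = \bbv_0\oplus\bbv_1\oplus\cdots\oplus\bbv_k$ and $H = H_0\times H_1\times\cdots\times H_k$; one must show that this forces a de~Rham splitting of $M$ unless $k\le 1$ and $\bbv_0$ or $\bbv_1$ is trivial. The paper does this via a geometric argument: fixed-point sets of the normal subgroups $H^i$ give $G$-invariant autoparallel distributions (Lemma~\ref{5.1-OR}), and $G$-invariant vector fields are Killing (Lemma~\ref{killing}); combining these, one shows the distribution $\cd_1$ with $\cd_1(p)=\bbv_1$ and its orthogonal complement are both autoparallel, hence $M$ splits. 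Only then does one reach the dichotomy $T_pM=\bbv_0$ (so $\Theta=0$) or $T_pM=\bbv_1$ with $H=H_1$ irreducible.

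\medskip

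\textbf{The endgame.} In the irreducible case $H=H_1$, if $H_1=\SO(T_pM)$ you conclude $M=S^n$ or $H^n$ because the \emph{full isotropy} is $\SO(n)$ --- this would not follow from $\Hol_p(\nabla^c)=\SO(n)$. If $H_1$ is non-transitive, the theorem gives a simple Lie algebra structure on $T_pM$ with $H_1$ acting by the adjoint representation; but concluding that $M$ itself is a Lie group (or its noncompact dual) requires the classification of isotropy irreducible spaces (Wolf, and the fact that noncompact isotropy irreducible spaces are symmetric), not just the algebraic structure on one tangent space. Your sentence ``in that case $M$ would be \ldots\ the Lie group $H$'' skips this entirely.
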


Observe that, in particular, the above theorem says that the canonical
connection is unique for the real hyperbolic space $H^n$ for all $n
\neq 3$. When $n = 3$, $H^3$ is the symmetric dual of $S^3 = \SU(2)$,
and in this case $H^3$ admits a line of canonical connections (see
Remark~\ref{hodge}). 

Before giving the proof of Theorem \ref{unica} we fix some notation
and we state some basic results we will need. 

Let $M = G/G_p$, $p \in M$, be a naturally reductive space. That is,
assume that $M$ carries a $G$-invariant metric and the Lie algebra of
$G$ admits a decomposition $\gg = \gg_p \oplus \gm$, where $\gg_p =
\Lie(G_p)$ and $\gm$ is an $\Ad(G_p)$-invariant subspace such that the
geodesics through $p$ are given by 
$$\Exp(tX) \cdot p, \qquad X \in \gm.$$ 
That is to say, Riemannian geodesics coincide with
$\nabla^c$-geodesics, where $\nabla^c$ is the canonical connection
associated with the above mentioned reductive decomposition. 

Suppose that $\nabla^{c'}$ is another canonical connection on $M$
(associated with another naturally reductive presentation or another
reductive decomposition). It follows from \cite[Section
  6]{olmos-reggiani-2012} that 
$$\Theta = (\nabla^{c'} - \nabla^c)_p,$$ 
that is the difference between $\nabla^{c'}$ and $\nabla^c$, evaluated
at $p$, is a totally skew-symmetric $1$-form on $T_pM$ which takes
values in the full isotropy subalgebra $\gh = \Lie(\Iso(M)_p)$. Hence,
$[T_pM, \Theta, H]$ is a skew-torsion holonomy system, where $H =
(\Iso(M)_p)^o$ is the connected component of the full isotropy
subgroup at $p$.  

Let $\tilde\gh$ be the linear span of $\{h_*(\Theta)_v: h \in H, v \in
T_pM\}$. We have that $\tilde\gh$ is an ideal of $\gh$. Let $\tilde H$
be the connected Lie subgroup of $H$ with Lie algebra $\tilde
\gh$. From \cite[Section 2]{olmos-reggiani-2012} there exist
decompositions  
\begin{equation}\label{eq:space}
T_pM = \bbv_0 \oplus \bbv_1 \oplus \cdots \oplus \bbv_k \qquad
\text{(orthogonal sum)} 
\end{equation}
and
\begin{equation}\label{eq:group}
\tilde H = H_1 \times \cdots \times H_k \qquad \text{(almost direct
  product)} 
\end{equation}
such that $H_i$ acts trivially on $\bbv_j$ if $i \neq j$ (in
particular, $\bbv_0$ is the set of fixed vectors of $\tilde H$) and
$H_i$ acts irreducibly on $\bbv_i$ with $\cc_i(\gh_i) = \{0\}$, where 
$$\cc_i(\gh_i) := \{B \in \so(\bbv_i): [B, \gh_i] = 0\}.$$ 
Moreover, we have that $H$ splits as
$$H = H_0 \times \tilde H = H_0 \times H_1 \times \cdots \times H_k,$$ 
where $H_0$ acts only on $\bbv_0$ (and it could be arbitrary). In
fact, any skew-torsion holonomy system can be decomposed in this way
(see \cite{olmos-reggiani-2012} and also
\cite{agricola-friedrich-2004, nagy-2007}).

In order to prove Theorem \ref{unica}, we will make use of the
following basic facts. 

\begin{lema}[see \cite{olmos-reggiani-2012}]\label{5.1-OR} 
Let $M = G/G_p$ be a Riemannian homogeneous manifold, let $H$ be a
normal subgroup of $G_p$ and let $\bbw$ be the subspace of $T_pM$
defined by 
$$\bbw = \{v \in T_pM: dh(v) = v \text{ for all $h \in H$}\}.$$ 
Then $\bbw$ is $G_p$-invariant. Moreover, if $\cd$ is the
$G$-invariant distribution on $M$ defined by $\cd(p) = \bbw$, then
$\cd$ is integrable with totally geodesic leaves (or, equivalently,
$\cd$ is autoparallel). 
\end{lema}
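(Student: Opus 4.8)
The plan is to deduce everything from the $G$-invariance of the relevant objects, via a short equivariance argument. First, $\bbw$ is $G_p$-invariant precisely because $H$ is normal in $G_p$: given $v\in\bbw$, $g\in G_p$ and $h\in H$, write $hg=g(g^{-1}hg)$ with $g^{-1}hg\in H$, so that $dh(dg(v))=dg\big(d(g^{-1}hg)(v)\big)=dg(v)$; hence $dg(v)\in\bbw$. Consequently $\cd(gp):=dg(\bbw)$ is well defined (if $g_1p=g_2p$ then $g_2^{-1}g_1\in G_p$ preserves $\bbw$) and, since $M$ is homogeneous, defines a smooth $G$-invariant distribution on $M$; its orthogonal complement $\cd^{\perp}$ is $G$-invariant as well, because $G$ acts by isometries.

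To prove that $\cd$ is autoparallel --- equivalently, integrable with totally geodesic leaves --- I would examine the second fundamental form of $\cd$ as a distribution. For $q\in M$, $v,w\in\cd(q)$ and a local section $\tilde w$ of $\cd$ with $\tilde w(q)=w$, put $\alpha_q(v,w):=\pi^{\perp}(\nabla_v\tilde w)$, where $\pi^{\perp}$ is the orthogonal projection onto $\cd^{\perp}$. Replacing $\tilde w$ by $f\tilde w$ changes $\nabla_v\tilde w$ by $(vf)\tilde w\in\cd$, which $\pi^{\perp}$ annihilates, so $\alpha_q(v,w)$ depends only on $w$ and is bilinear over functions; thus $\alpha$ is a tensor $\cd\times\cd\to\cd^{\perp}$. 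Its symmetric part is the second fundamental form of the leaves (once integrability is known) and its skew part is the Frobenius integrability tensor, so $\alpha\equiv 0$ is exactly the autoparallel condition. Because $\nabla$, $\cd$, $\cd^{\perp}$ and the metric are all $G$-invariant, $\alpha$ is $G$-invariant, i.e.\ $\alpha_{gq}(dg(v),dg(w))=dg(\alpha_q(v,w))$; as $G$ is transitive, it then suffices to show $\alpha_p=0$.

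For the latter, observe that $H$ fixes $p$ and, by the very definition of $\bbw=\cd(p)$, acts trivially on $\cd(p)$, while it preserves $\cd$ and $\cd^{\perp}$. Applying the $G$-invariance of $\alpha$ with $g=h\in H$ and $v,w\in\bbw$ yields $dh(\alpha_p(v,w))=\alpha_p(dh(v),dh(w))=\alpha_p(v,w)$, so $\alpha_p(v,w)$ is fixed by $H$, that is, $\alpha_p(v,w)\in\bbw$. On the other hand $\alpha_p(v,w)\in\cd^{\perp}(p)=\bbw^{\perp}$, whence $\alpha_p(v,w)\in\bbw\cap\bbw^{\perp}=\{0\}$. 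Therefore $\alpha\equiv 0$, and $\cd$ is integrable with totally geodesic leaves (the leaf through $p$ being the connected component through $p$ of the fixed point set of $H$, although that identification is not needed). The only step requiring care is the purely formal check that $\alpha$ is a well-defined $G$-invariant tensor attached to $\cd$; once that is in place, the normality of $H$ (for the $G_p$-invariance of $\bbw$) and the triviality of the $H$-action on $\bbw$ (for the vanishing of $\alpha_p$) do all the work.
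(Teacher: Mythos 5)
Your argument is correct. Note, however, that the paper does not actually reprove this lemma: it is quoted with a citation to \cite{olmos-reggiani-2012}, so there is no in-text proof to compare against. The proof given there (and the most common route) is more geometric: the connected component through $p$ of the fixed-point set of $H$ in $M$ is a totally geodesic submanifold (being a fixed-point set of a group of isometries) whose tangent space at $p$ is exactly $\bbw$, and its $G$-translates are the integral manifolds of $\cd$, which yields integrability and total geodesy simultaneously. Your route replaces this by a pointwise tensor computation: you package the obstruction to autoparallelism into the $\cd^{\perp}$-valued tensor $\alpha$, observe it is $G$-invariant, and kill it at $p$ by noting that $\alpha_p(\bbw,\bbw)$ is $H$-fixed, hence lies in $\bbw\cap\bbw^{\perp}=\{0\}$. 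All the individual steps check out: the normality of $H$ gives $G_p$-invariance of $\bbw$ exactly as you write; $\alpha$ is tensorial by the standard computation you sketch; and its vanishing is equivalent to $\cd$ being autoparallel. Your version is slightly more self-contained (it does not invoke the fact that fixed-point sets of isometries are totally geodesic), at the cost of setting up the tensor formalism; the fixed-point-set argument is shorter but in addition identifies the leaves explicitly, which is how the lemma is actually used later in the proof of Theorem \ref{unica} (where $S^1(p)$ is taken to be the fixed-point component of $H^1$).
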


\begin{lema}\label{killing}
Let $M = G/G_p$ be a naturally reductive space. If $X$ is a
$G$-invariant field on $M$, then $X$ is a Killing field. 
\end{lema}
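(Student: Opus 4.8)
The plan is to reduce the Killing equation to a simple algebraic identity for the torsion of the canonical connection. Since $\nabla^c$ is the canonical connection attached to the naturally reductive presentation $M = G/G_p$ and $X$ is $G$-invariant, the field $X$ is $\nabla^c$-parallel, i.e.\ $\nabla^c X = 0$. This is the general fact, recalled in the Introduction, that every $G$-invariant tensor on $M$ is parallel for $\nabla^c$; concretely, the $\nabla^c$-parallel transport along a geodesic $t \mapsto \Exp(tY)\cdot p$, $Y \in \gm$, is $d(\Exp(tY))$, which preserves $X$, and these geodesics exhaust all directions at $p$.

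Next I would use that on a naturally reductive space the difference tensor is, up to a constant, the torsion $T$ of $\nabla^c$. With the normalization $T = -2(\nabla - \nabla^c)$ of the Introduction we have $\nabla_Y Z = \nabla^c_Y Z - \frac12 T(Y,Z)$, and the characteristic feature of the naturally reductive case is that $(Y,Z,W) \mapsto \langle T(Y,Z), W\rangle$ is totally skew-symmetric, i.e.\ a $3$-form. Combining this with $\nabla^c X = 0$ gives, for all vector fields $Y,Z$ on $M$,
$$\nabla_Y X \;=\; \nabla^c_Y X - \tfrac12 T(Y,X) \;=\; -\tfrac12 T(Y,X) \;=\; \tfrac12 T(X,Y).$$
Then
$$\langle \nabla_Y X, Z\rangle + \langle \nabla_Z X, Y\rangle \;=\; \tfrac12\langle T(X,Y),Z\rangle + \tfrac12\langle T(X,Z),Y\rangle \;=\; 0,$$
since $\langle T(X,Z),Y\rangle = -\langle T(X,Y),Z\rangle$ by the total skew-symmetry of $\langle T(\cdot,\cdot),\cdot\rangle$. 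Hence $\mathcal L_X g = 0$, so $X$ is a Killing field; as $M$ is homogeneous it is complete, so $X$ is complete and its flow consists of isometries.

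There is no real obstacle here. The only points that need a little care are: (i) the justification that a $G$-invariant vector field is $\nabla^c$-parallel, i.e.\ the ``transvection'' description of $\nabla^c$-parallel transport, which is classical for naturally reductive spaces; and (ii) the bookkeeping of the sign and the factor $\tfrac12$ in $\nabla = \nabla^c - \tfrac12 T$, which is exactly what makes the two torsion terms cancel rather than add in the displayed computation.
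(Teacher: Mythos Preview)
Your argument is correct and is essentially the paper's own proof: both observe that a $G$-invariant field is $\nabla^c$-parallel (via the transvection description of $\nabla^c$-parallel transport), so that $\nabla X$ equals the difference tensor $D = \nabla - \nabla^c$ applied to $X$, and then use that $D$ (equivalently $-\tfrac12 T$) is totally skew to conclude that $\nabla X$ is skew-symmetric. The only difference is cosmetic---you spell out the cancellation with $T$ explicitly, while the paper phrases it directly in terms of $D$.
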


\begin{proof}
Let $X$ be a $G$-invariant field on $M$ and let $D = \nabla -
\nabla^c$ be the difference tensor between the Levi-Civita connection
and a canonical connection on $M$ associated with a reductive
decomposition $\gg = \gg_p \oplus \gm$. Since $X$ is $G$-invariant,
then $X$ is $\nabla^c$-parallel (since $\nabla^c$ is $G$-invariant and
the $\nabla^c$-parallel transport along the geodesic $\Exp(tZ) \cdot
p$, $Z \in \gm$, is given by $\Exp(tZ)_*$). So, $\nabla X = DX$ is
skew-symmetric and this implies that $X$ is a Killing field. 
\end{proof}

\begin{proof}[Proof of Theorem \ref{unica}]
We keep the notation from the previous paragraphs. We have
decompositions $T_pM = \bbv_0 \oplus \bbv_1 \oplus \cdots \oplus
\bbv_k$ (orthogonal) as in \ref{eq:space} and $H = H_0 \times \tilde H
= H_0 \times H_1 \times \cdots \times H_k$ as in \ref{eq:group}. Let
$\bbw_0$ be the set of fixed vectors of $H$ in $T_pM$, via the
isotropy representation. So, $\bbw_0 \oplus \bbv_1$ is the set of
fixed vectors of $H^1 = H_0 \times H_2 \times \cdots \times H_k$ and
hence, by Lemma \ref{5.1-OR}, it induces the $G$-invariant
autoparallel distribution $\cd^1$ defined by $\cd^1(p) = \bbw_0 
\oplus \bbv_1$.

Let $\cd_0$ the $G$-invariant autoparallel distribution defined by
$\cd_0(p) = \bbw_0$. The key factor in the proof is to show that
$\cd_0$ is parallel along $\cd^1$, and then make use of the
skew-torsion holonomy theorem. Since $\cd_0$ is $G$-invariant we only
have to prove that $\cd_0$ is parallel at $p$ (along $\cd^1$). 

Let $S^1(p)$ be the maximal connected integral manifold of $\cd^1$
which contains $p$. That is, $S^1(p)$ is the set of fixed points of
$H^1$ on $M$ (connected component). It is not difficult to see that
$S^1(p)$ is an extrinsic homogeneous submanifold under the action of
the group 
$$G^1(p) = \{g \in G: g(S^1(p)) = S^1(p)\} = \{g \in G: g(p) \in
S^1(p)\}$$ 
with effective isotropy $H_1$. Recall that the metric on $S^1(p)$ is
naturally reductive, since $S^1(p)$ is a totally geodesic submanifold
of $M$. 

Let $X \in \bbw_0$ and let $\tilde X$ be the $G^1(p)$-invariant field
on $S^1(p)$ such that $\tilde X(p) = X$, or equivalently, the
restriction to $S^1(p)$ of the $G$-invariant field on $M$ with initial
condition $X$. It follows from Lemma \ref{killing} that $\tilde X$ is
a Killing field and hence, its derivative $\nabla \tilde X$ is
skew-symmetric. 

Observe that if $h \in H_1$ and $v \in \bbw_0 \oplus \bbv_1 \simeq
T_pS^1(p)$, then 
$$dh(\nabla_v\tilde X) = \nabla_{dh(v)}h_*(\tilde X) =
\nabla_{dh(v)}\tilde X.$$ 
Then $(\nabla \tilde X)_p$ commutes with $H_1$ (via the isotropy
representation) and so, it leaves $\bbw_0$ and $\bbv_1$
invariant. Since $\cc_1(\gh_1) = \{0\}$ we have that $(\nabla\tilde
X)_p|_{\bbv_1} \equiv 0$, and therefore $\nabla_v\tilde X \in \bbw_0$
for all $v \in \bbw_0 \oplus \bbv_1$. This implies that $\cd_0$ is
parallel along $\cd^1$. Then, $\cd_1$ is parallel along $\cd^1$ and
hence, $\cd_1$ is autoparallel on $M$, since $\cd^1$ is
autoparallel. On the other side, we have that $\cd_1^{\bot}$ is also
an autoparallel distribution on $M$, since $\cd_1^\bot(p) = \bbv_0
\oplus \bbv_2 \oplus \cdots \oplus \bbv_k$ is the set of fixed vectors
of $H_1$, and therefore $M$ splits off, unless these distributions are
trivial. 

Finally, we reach two possibilities:
\begin{enumerate}
\item $T_pM = \bbv_0$ and $H = (\Iso(M)_p)^o = H_0$, or 
\item $T_pM = \bbv_1$ and $H = (\Iso(M)_p)^o = H_1$.
\end{enumerate}

In the first case, we have that the group $\tilde H$ spanned by
$\nabla^{c'} - \nabla^c$ is trivial, and then we conclude that
$\nabla^c = \nabla^{c'}$. 

In the second case, we have two possibilities again. Firstly, if $H_1$
is transitive on the unit sphere of $T_pM$, then, by using the
skew-torsion holonomy theorem, we have that $H_1 = (\Iso(M)_p)^o =
\SO(T_pM)$. So, it is standard to see that $M = S^n$ or $M = H^n$. See
Proposition \ref{hiperbolico} and Remark \ref{hodge} below to exclude
the hyperbolic case when $n \neq 3$. On the other hand, if $H_1$ is
not transitive on the sphere, the skew-torsion holonomy theorem says
that $H_1$ acts on $T_pM$ as the adjoint representation of a simple
and compact Lie group. If $M$ is compact, it follows from the
classification of strongly isotropy irreducible spaces, given by
J.\ Wolf in \cite{wolf-1968} (see \cite[Appendix]{olmos-reggiani-2012}
for a conceptual proof), that $M$ is a Lie group with a bi-invariant
metric. If $M$ is non-compact, then $M$ turns out a symmetric space
(since non-compact isotropy irreducible spaces must be symmetric
\cite{besse-1987, wang-ziller-1991}). If $\nabla^c \neq \nabla^{c'}$,
we have, by taking the symmetric dual, that $M^*$ is isometric to a
Lie group with a bi-invariant metric. In fact, it is not difficult to
see that there is a one-one correspondence between canonical
connections on $M$ and canonical connections on $M^*$ (see Remark
\ref{dual}). 

This completes the proof of Theorem \ref{unica}.
\end{proof}

\begin{prop}\label{hiperbolico}
The real hyperbolic space $H^n$ admits a unique naturally reductive
presentation, the symmetric pair decomposition $H^n = \SO(n + 1,
1)^o/\SO(n)$. 
\end{prop}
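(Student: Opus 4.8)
The plan is to show that any naturally reductive presentation $H^n = G/G_p$ must have $G$ acting transitively with isotropy containing enough of $\SO(n)$ to force the reductive decomposition to be the symmetric one. First I would recall that for a naturally reductive space $M = G/G_p$, the connected isometry group $\Iso(M)^o$ acts transitively, and $G$ may be taken to be a connected subgroup of $\Iso(H^n)^o = \SO(n+1,1)^o$ that still acts transitively on $H^n$. The key structural fact is that the full isotropy $\Iso(H^n)_p^o = \SO(n)$ acts irreducibly on $T_pH^n = \bbr^n$; hence the smaller isotropy $G_p$ acts on $\bbr^n$, and I would argue that $G_p$ must already act irreducibly. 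Indeed, if $G_p$ left a proper subspace $\bbw \subset T_pM$ invariant, then by Lemma~\ref{5.1-OR} the associated $G$-invariant distribution $\cd$ would be autoparallel, giving a totally geodesic proper submanifold through $p$ on which the metric is again naturally reductive; combined with the orthogonal complement (also $G_p$-invariant and hence autoparallel) this would force $H^n$ to split as a Riemannian product, which is impossible since $H^n$ is irreducible. So $G_p$ acts irreducibly on $\bbr^n$.

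Next I would use the classification of transitive actions on hyperbolic space, or argue directly: a connected subgroup $G \subseteq \SO(n+1,1)^o$ transitive on $H^n$ with $G_p$ acting irreducibly on the tangent space. The candidates for $G_p \subseteq \SO(n)$ acting irreducibly on $\bbr^n$ are constrained, but the decisive point is that $G$ must contain transvections (one-parameter groups of "translations" along geodesics), and the reductive complement $\gm$ is characterized by $\Exp(tX)\cdot p$ being a geodesic. In the symmetric presentation, $\gm = \gp$ is the $(-1)$-eigenspace of the Cartan involution, and the transvection group $\SO(n+1,1)^o$ is generated by $G_p$ together with $\exp(\gm)$. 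I would show that any naturally reductive $\gm$ must, together with $\gg_p$, generate all of $\gg$, and that the bracket relations $[\gg_p,\gm]\subseteq\gm$ plus the geodesic condition force $\gm$ to be $\Ad(G_p)$-irreducible and isomorphic as a $G_p$-module to $T_pM$; in the rank-one symmetric space $H^n$ this pins down $\gm$ uniquely inside $\gg = \gg_p \oplus \gm$ up to the automorphism identifying presentations.

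The cleanest route is probably: (1) show $G_p$ acts irreducibly on $T_pH^n$ (via the splitting argument above); (2) invoke that an isotropy-irreducible homogeneous presentation of a symmetric space of rank one, with $G$ connected and transitive, must have $G$ itself equal to (an isogeny cover of) the full transvection group $\SO(n+1,1)^o$ — this uses that the Lie algebra $\gg$ generated inside $\so(n+1,1)$ by an irreducible isotropy together with a complementary $\gm$ on which geodesics are one-parameter subgroups has no proper choices, since $\so(n+1,1)$ has no proper subalgebra projecting onto an irreducible isotropy and containing a transvection direction; (3) conclude that the reductive decomposition coincides with the symmetric one, because once $G = \SO(n+1,1)^o$ and $G_p = \SO(n)$, the only $\Ad(\SO(n))$-invariant complement to $\so(n)$ in $\so(n+1,1)$ is $\gp$ (as $\gp$ is $\Ad(\SO(n))$-irreducible, Schur's lemma leaves no room), so $\nabla^c$ is the Levi-Civita connection.

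The main obstacle I expect is step (2): ruling out \emph{all} proper connected transitive subgroups $G \subsetneq \SO(n+1,1)^o$ that could carry a naturally reductive structure with irreducible isotropy. One must be careful that, a priori, $G_p$ could be strictly smaller than $\SO(n)$ while still acting irreducibly (e.g. via some special holonomy-type subgroup), and that $G$ could be, say, a parabolic-type or solvable transitive group (hyperbolic space does admit a simply transitive solvable group). For the solvable simply transitive case, $G_p$ is trivial, which is \emph{not} irreducible unless $n = 1$, so step (1) already excludes it; more generally the irreducibility of $G_p$ forbids $G$ from being too small. The remaining work is to enumerate the irreducible subgroups of $\SO(n)$ and check that none of them, sitting inside $\SO(n+1,1)$ together with a transvection-admitting complement, generates a proper subalgebra — here one leans on the fact (used elsewhere in this circle of ideas) that such configurations are governed by holonomy/transvection-algebra arguments, and for the constant-curvature space the transvection algebra is forced to be all of $\so(n+1,1)$.
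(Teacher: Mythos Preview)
Your Step~(1) has a real gap: Lemma~\ref{5.1-OR} does \emph{not} assert that an arbitrary $G_p$-invariant subspace $\bbw\subset T_pM$ yields an autoparallel $G$-invariant distribution. The hypothesis there is that $\bbw$ be the set of \emph{fixed vectors of a normal subgroup} $H\trianglelefteq G_p$, which is far more restrictive. In fact the conclusion you draw---that the isotropy of a naturally reductive presentation of an irreducible space must act irreducibly---is false in general: take any compact simple Lie group $L$ with a bi-invariant metric and the presentation $L=L/\{e\}$. This is naturally reductive and de~Rham irreducible, yet the isotropy is trivial. An arbitrary subspace $\bbw\subset\mathfrak l$ is of course $\{e\}$-invariant and the associated left-invariant distribution is $\nabla^c$-parallel, but it is $\nabla$-autoparallel only when $\bbw$ is a Lie triple system; generically it is not even integrable. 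Since your exclusion of the solvable transitive groups on $H^n$, and indeed all of Step~(2), rests on Step~(1), the argument does not go through.

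The paper's proof follows a quite different route and never invokes isotropy irreducibility. It splits on whether $G$ is semisimple. If so, a maximal compact subgroup $K\subset G$ has a fixed point on $H^n$ by Cartan's theorem; taking that point as $p$ forces $G_p=K$, so $(G,K)$ is an effective symmetric-pair presentation of $H^n$ and hence $G=\SO(n+1,1)^o$. If $G$ is not semisimple, choose a nontrivial abelian normal subgroup $A\subset G$. Either $A$ translates a unique geodesic---then $G$ preserves that geodesic and cannot be transitive---or $A$ fixes a unique point $q_\infty$ at infinity. In the latter case $G$ preserves the horosphere foliation centred at $q_\infty$; this $G$-invariant \emph{integrable} distribution then has totally geodesic leaves in the naturally reductive space $G/G_p$, contradicting the fact that horospheres in $H^n$ are not totally geodesic. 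Note that integrability of the horosphere foliation is given a priori here, which is exactly what your argument for an arbitrary invariant subspace is missing.
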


\begin{proof}
Let $G$ be a connected Lie subgroup of $\Iso(H^n)$ which acts
transitively on $H^n$ and such that $H^n = G/H$ is a naturally
reductive space. If $G$ is semisimple, it is standard to show that $G
= \Iso(H^n)^o = \SO(n + 1, 1)^o$. In fact, let $K$ be a maximal
compact subgroup of $G$. So, $K$ has a fixed point, say $p$. We may
assume that $H$ is the isotropy group at $p$. So $H = K$, since $K$ is
maximal. Hence, $(G,H)$ is presentation of $H^n$ as an effective
Riemannian symmetric pair, and therefore $G = \SO(n + 1, 1)^o$
(otherwise, $H^n$ would have two different presentations as an
effective Riemannian symmetric pair). 

If $G$ is not semisimple, then $G$ has a nontrivial normal abelian Lie
subgroup $A$. It is a well-known fact that, either $A$ fixes a unique
point at infinity or $A$ translates a unique geodesic. If $A$
translates a unique geodesic $\gamma(t)$, then $G$ leaves $\gamma$
invariant, since $A$ is a normal subgroup of $G$, and so $G$ cannot be
transitive, which is a contradiction. So, let $q_\infty$ be the unique
point at infinity which is fixed by $A$, and let $\cf$ be the
foliation on $H^n$ by parallel horospheres centered at $q_\infty$. So,
we have that $A$ leaves $\cf$ invariant, and hence $G$ does. Let $p
\in H^n$ and let $\cf_p$ be the horosphere through $p$. Denote by
$\tilde G$ the connected component of the subgroup of $G$ which leaves
$\cf_p$ invariant. Then $\tilde G$ is transitive on $\cf_p$. Hence,
since $H^n$ is naturally reductive with respect to the decomposition
$G/H$, each horosphere must be totally geodesic, a contradiction. 
\end{proof}

\begin{nota}\label{hodge}
Let us consider the sphere $S^n = \SO(n+1)/\SO(n)$. Then, for all $n
\neq 3$, the Levi-Civita connection is the unique canonical connection
on $S^n$ associated with this naturally reductive decomposition. In
fact, if $\nabla^c$ is another canonical connection on $S^n$ then, the
difference tensor $D = \nabla - \nabla^c$ induces a
$\SO(n+1)$-invariant $3$-form $\omega(x,y,z) = \langle D_xy, z
\rangle$. Since $\omega$ is invariant, $\omega$ is a harmonic $3$-form
on $S^n$. From Hodge theory, $\omega$ represents a nontrivial
cohomology class of order $3$ of the sphere $S^n$. This yields a
contradiction, unless $n = 3$. 

As a consequence, it follows from Proposition \ref{hiperbolico} and
the next remark that the real hyperbolic space $H^n$ admits a unique
canonical connection for all $n \neq 3$. If $n=3$, $H^3$ is the dual
symmetric space of the compact Lie group $S^3 \simeq \SU(2)$, and
therefore it admits exactly a line of canonical connections (see
\cite[Remark~6.1]{olmos-reggiani-2012}).
\end{nota}

\begin{nota}\label{dual}
Let $M = G/K$ be a symmetric space with associated Cartan
decomposition $\gg = \gk \oplus \gp$. Then, there is a one-one
correspondence between canonical connections on $M$ and canonical
connections on the dual $M^* = G^*/K$. In fact, assume that $M$ admits
a canonical connection $\nabla^c$ associated with a reductive
decomposition $\gg = \gk \oplus \gm$. Let $\gg^* = \gk \oplus i\,\gp$
be the Lie algebra of $G^*$, regarded as a subspace of the
complexification $\gg^\bbc$ of $\gg$. It is clear that $\gm^*$ (the
subspace of $\gg^*$ induced by $\gm$, via the vector spaces
isomorphism $\gg^* \simeq \gg$) is and $\Ad^*(K)$-invariant subspace
such that the geodesics through $p = eK$ are given by 1-parameter
subgroups with initial velocities in $\gm^*$. So, $\nabla^c$
corresponds to a unique canonical connection on $M^*$. 
\end{nota}

\section{The isometry group of compact naturally reductive spaces} 
\label{sec:3}

Let $M = G/H$ be a compact and locally irreducible naturally reductive
space and let $\nabla^c$ be the canonical connection associated with
the reductive decomposition $\gg = \gh \oplus \gm$. Assume that $M
\neq S^n$, $M \neq \bbr P^n$. Then, from
\cite[Theorem~1.1]{olmos-reggiani-2012} we have that $\Iso(M)^o =
\Aff(M, \nabla^c)^o$, where $\Aff(M, \nabla^c)^o$ is the connected
component of the affine group of $\nabla^c$ (i.e., the subgroup of
diffeomorphisms of $M$ which preserve~$\nabla^c$).

By making use of Lemma \ref{killing} and some arguments in
\cite{reggiani-2010} one can obtain the connected component of the
isometry group of $M$. Actually, it is possible to simplify such
arguments. 

In fact, let $\Tr(M, \nabla^c)$ be the group of transvections of
$\nabla^c$, that is, the connected Lie subgroup of $\Aff(M,
\nabla^c)^o$ with Lie algebra $\tr(M, \nabla^c) = [\gm, \gm] + \gm$
(not a direct sum, in general). Recall that $\Tr(M, \nabla^c)$ is a
normal subgroup of $\Aff(M, \nabla^c)^o$. As it is done in
\cite{reggiani-2010} for normal homogeneous spaces, we have that $G =
\Tr(M, \nabla^c)$ and thus, $G$ is a normal subgroup of $\Aff(M,
\nabla^c)^o$. (In fact, $\tr(M, \nabla^c)$ is an ideal of $\gg$, then
if $\tr(M, \nabla^c) \neq \gg$, since $M$ is compact, one can take a
complementary ideal of the transvection algebra in $\gg$, which must
be contained in the isotropy algebra. This is a contradiction, since
we assume that $G$ acts effectively on $M$.) 

Now, since $G$ is a normal subgroup of $\Iso(M)^o = \Aff(M,
\nabla^c)^o$, we can write 
$$\iso(M) = \gg \oplus \gb,$$
where $\gb$ is a complementary ideal of $\gg$ in $\iso(M)$ (recall
that $M$ is compact, and hence $\Iso(M)^o$ is also compact). Note that
elements of $\gb$ correspond to $G$-invariant fields on $M$, which are
Killing fields by Lemma \ref{killing} (but not any $G$-invariant field
belongs to $\gb$, in principle). 

We can summarize this fact as follows.

\begin{teo}\label{isometrias}
Let $M = G/H$ be a compact naturally reductive space. Assume that $M$
is locally irreducible and that $M$ is not (globally) isometric to the
sphere $S^n$ nor to the real projective space $\bbr P^n$. Then the
connected component of the isometry group of $M$ is given by 
$$\Iso(M)^o = G_{\mathrm{ss}} \times K \qquad \text{(almost direct product),}$$
where $G_{\mathrm{ss}}$ is the semisimple part of $G$ and $K$ is the
connected subgroup of $\Iso(M)$ whose Lie algebra consists of the
$G$-invariant fields. In particular, $\Iso(M)$ is semisimple if and
only if $K$ is semisimple. 
\end{teo}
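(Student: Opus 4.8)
The plan is to assemble the theorem from the structural facts already established in Section~\ref{sec:3}. We have the orthogonal decomposition of Lie algebras $\iso(M) = \gg \oplus \gb$, where $\gb$ is a complementary ideal and $G = \Tr(M,\nabla^c)$. First I would split $\gg$ itself: since $M$ is compact and locally irreducible, $\Iso(M)^o$ is compact and semisimple is too strong, but $\gg$ decomposes as $\gg = \gg_{\mathrm{ss}} \oplus \mathfrak{z}$, where $\gg_{\mathrm{ss}} = [\gg,\gg]$ is the semisimple part and $\mathfrak z$ is the center of $\gg$. The point is that $\mathfrak z$ consists of fields that are $G$-invariant (a central element commutes with all of $\gg$, hence its flow commutes with the $G$-action), so by Lemma~\ref{killing} those are already accounted for; more precisely, I would argue $\mathfrak z \subseteq \mathfrak k := \{X \in \iso(M): X \text{ is } G\text{-invariant}\}$ and that $\mathfrak k = \mathfrak z \oplus \gb$ as an ideal of $\iso(M)$, so that $\iso(M) = \gg_{\mathrm{ss}} \oplus \mathfrak k$, which is the desired splitting at the Lie algebra level; exponentiating gives $\Iso(M)^o = G_{\mathrm{ss}} \cdot K$ with finite intersection, i.e.\ an almost direct product.

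Next I would pin down that $\mathfrak k$ is exactly the set of $G$-invariant Killing fields and that $K$ is normal, which makes the product direct up to finite intersection. The inclusion $\gb \subseteq \mathfrak k$ is clear: $\gb$ commutes with $\gg$ inside $\iso(M)$, so its flows commute with the $G$-action. Conversely, if $X \in \iso(M)$ is $G$-invariant then $[X,\gg] = 0$; writing $X = X_{\mathrm{ss}} + X_0$ according to $\iso(M) = \gg_{\mathrm{ss}} \oplus \mathfrak k$ (once that decomposition is in hand) forces $X_{\mathrm{ss}}$ to be central in $\gg_{\mathrm{ss}}$, hence zero, so $X \in \mathfrak k$. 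This gives $\mathfrak k = \{X \in \iso(M): X \text{ is } G\text{-invariant}\}$, matching the statement. Normality of $\mathfrak k$ is automatic since it is the centralizer of the ideal $\gg_{\mathrm{ss}}$ (equivalently the sum of the ideal $\gb$ and the center of $\gg$). The final clause — that $\Iso(M)$ is semisimple iff $K$ is — then follows because $G_{\mathrm{ss}}$ is semisimple by construction, and a direct product of a semisimple group with $K$ is semisimple precisely when $K$ has no nontrivial center/abelian factor.

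The main obstacle I anticipate is justifying cleanly that $\mathfrak z$, the center of the transvection algebra $\gg$, really does sit inside the complement $\gb$ up to the splitting, i.e.\ that the center of $\gg$ does not interact with $\gg_{\mathrm{ss}}$ in a way that spoils the product structure — in other words, ruling out that some central direction of $\gg$ is needed to generate $G_{\mathrm{ss}}$. Here one uses that $M$ is compact and locally irreducible: local irreducibility prevents $\gg$ from having a large abelian part acting on a de Rham factor, and compactness gives the reductive (orthogonal) decomposition of $\iso(M)$ that makes $\gg_{\mathrm{ss}} = [\gg,\gg]$ a genuine ideal complemented by its centralizer. A secondary, more bookkeeping-type point is to make sure the intersection $G_{\mathrm{ss}} \cap K$ is finite (so ``almost direct product'' is the correct phrasing); this is standard once the Lie algebras have zero intersection and $\Iso(M)^o$ is compact. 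I would present the argument by first stating $\iso(M) = \gg_{\mathrm{ss}} \oplus \mathfrak k$ as the key identity, deriving it from $\iso(M) = \gg \oplus \gb$ together with $\gg = \gg_{\mathrm{ss}} \oplus \mathfrak z$ and $\mathfrak z \subseteq \mathfrak k$, and then exponentiating and invoking Lemma~\ref{killing} for the characterization of $\mathfrak k$.
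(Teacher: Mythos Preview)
Your proposal is correct and follows essentially the same route as the paper: the discussion preceding the theorem establishes $\iso(M)=\gg\oplus\gb$ with $\gb$ consisting of $G$-invariant Killing fields, and the passage to $\gg_{\mathrm{ss}}\oplus\mathfrak k$ by shifting the center $\mathfrak z$ of $\gg$ into $\mathfrak k$ is exactly the step the paper leaves implicit when it writes ``We can summarize this fact as follows.'' Your identification of $\mathfrak k$ as the centralizer of $\gg$ (equivalently of $\gg_{\mathrm{ss}}$) in $\iso(M)$, and the finiteness of $G_{\mathrm{ss}}\cap K$ from $\gg_{\mathrm{ss}}\cap\mathfrak k=0$ in a compact group, are the right justifications.
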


\begin{nota}
In the notation of Theorem \ref{isometrias}, $K$ can be identified
with (the connected component of) the set of fixed points of the
isotropy group $H$, acting simply and transitively by right
multiplication. Moreover, just by coping the argument in \cite[Theorem
  1.4]{reggiani-2010} we get that the set of fixed points of the full
isotropy group $(\Iso(M)^o)_p$ is a torus. 
\end{nota}


\end{document}